\newtheorem{prelem}{{\bf Proposition}}
\newtheorem{theorem}{Theorem}
\newtheorem{proposition}{Proposition}
\newtheorem{corollary}[theorem]{Corollary}
\newtheorem{observation}[theorem]{Observation}
\theoremstyle{definition}
\theoremstyle{remark}
\title{Revisiting the outer-weakly convex domination number in graph products }
\date{}
\author
{
	Bijo S. Anand$^1$,  Ullas Chandran S. V.$^2$,\\ Jonecis A. Dayap$^3$, Leomarich F. Casinillo$^4$, Karen Luz P. Yap$^4$ \\
	\\
	$^1$ Department of Mathematics, Sree Narayana College, Punalur - 691305, \\Kollam, Kerala, India, email:bijos\_anand@yahoo.com\\$^2$ Department of Mathematics, Mahatma Gandhi College, \\Thiruvananthapuram - 695004, Kerala, India, email:  svuc.math@gmail.com \\$^3$University of San Jose - Recoletos, Philippines, \\email:jdayap@usjr.edu.ph \\ $^4$Visayas State University, Philippines, \\email:leomarichcasinillo02011990@gmail.com, karen.yap@vsu.edu.ph 
}
\begin{document}
\maketitle

\begin{abstract}
Let $G = (V, E)$ be a simple undirected connected graph. A set $C \subseteq V(G)$ is weakly convex in $G$ if for every two vertices $u,v$ in $G$, there exists a $u-v$ geodesic whose vertices are in $C$. A set $C \subseteq V$ is an outer-weakly convex dominating set if every vertex not in $C$ is adjacent to some vertex in $C$ and the set $V(G)\setminus C$ is weakly convex in $G$. The outer-weakly convex domination number of graph $G$, denoted by $\widetilde{ \gamma}_{wcon}(G)$, is the minimum cardinality of an outer-weakly convex dominating set of graph $G$. In this paper, we determine the outer-weakly convex domination number of two graphs under the Cartesian, strong and lexicographic products, and discuss some important combinatorial findings. 

\end{abstract}

\noindent{\bf Keywords}: convex set, dominating set, weakly convex set, Cartesian product, strong product, lexicographic product.       

\noindent {\bf AMS Subj. Clas}: 05C38, 05C76, 05C99, 52A01.

\section{Introduction}
Domination number is one of the topics in graph theory that remains intriguing. Currently, there are numerous research activities happening in the area and some interesting results can be found in \cite{r1, r2, r3, r5, r6}. In fact, there are more than a hundred of domination parameters published and still rapidly growing \cite{r1, r2, r4, r5, r8, r9}. In that case, this made the essential part to the mathematicians' motivation in conducting a research paper in this particular area. In other words, the authors of this paper have explored a recent parameter called outer-weakly convex domination number of graphs \cite{r7}. Additionally, new results from this parameter were constructed under some product operations in graphs. So, with that exploration, we need some important definitions in graph theory that will be useful throughout this paper and which can be found in \cite{r3, r6}.

Let $G=(V, E)$ be a simple undirected graph. A set $C \subseteq V(G)$ is called a \emph{dominating set} of the graph $G$ if, for every vertex $x \in V(G) \setminus C$, there exists a vertex $y \in C$ such that $xy$ is an edge in $G$. In that case, the \emph{domination number} of $G$, denoted by $\gamma(G)$, is the minimum cardinality of a dominating set $C$ of $G$. 

A graph $G$ is said to be \emph{connected} if there exists at least one path between each pair of vertices $u$ and $v$ in $G$; otherwise, $G$ is disconnected.
For arbitrary vertices $x$ and $y$ in a connected graph $G$, the \emph{distance} between $x$ and $y$, denoted by $d_G(x, y)$, is the length of a shortest path in graph $G$. A shortest $x-y$ path is also known as an $x-y$ \emph{geodesic} and the \emph{closed interval}, denoted by $I_G[x,y]$, consists of all those vertices that are lying on an $x-y$ geodesic in a graph $G$. For a subset $D$ of vertices of a graph $G$, the union of all sets $I_G[x, y]$ for $x, y \in D$ is denoted by $I_G[D]$. Thus, a vertex $u$ is in $I_G[D]$ if and only if $u$ is lying on some $x-y$ geodesic, where $x, y \in D$. A set $D$ is \emph{convex} if and only if $I_G[D]=D$. Consequently, if a graph $G$ is a connected graph, then it follows directly that $V(G)$ is a convex set. For more concepts involving convexity in graphs, the readers are referred to \cite{r8, r9, r10, r_peleyo}.

A subset $S$ of vertices of a graph $G$ is called \emph{outer-convex dominating} if it is a dominating set and the subgraph induced by $V(G)\setminus S$,
denoted by $\langle V(G)\setminus S \rangle $, is convex. The \emph{outer-convex domination number} $\widetilde{ \gamma}_{con}(G)$ is the
minimum cardinality of an outer-convex dominating set $S$ of $G$. Now, a subset $C$ of vertices is called \emph{weakly convex} if for every pair of vertices $x,y\in C$, there is a $x-y$ geodesic whose vertices are in $C$. A subset $D \subseteq V(G)$ is an \emph{outer-weakly convex dominating set} if it is a dominating set and the set $V(G)\setminus D$ is weakly convex. The \emph{outer-weakly convex domination number} of a graph $G$, denoted by $\widetilde{ \gamma}_{wcon}(G)$, is the minimum cardinality of an outer-weakly convex dominating set of $G$. For studies involving outer-convex dominating sets and outer-weakly convex dominating sets, the readers are referred to \cite{r4, r7, r8}. Moreover, we need the following concepts of product operations in graphs that are studied in \cite{r11, r12, r13}.

For the three standard products of a pair of graphs $G$ and $H$, the vertex set of the product is $V(G)\times V(H)$. Their edge sets are defined as follows. In the \emph{Cartesian product} $G \square H$, the vertices $(g, h)$ and $(g', h')$ are adjacent if and only if either 
i) $gg'\in E(G)$ in $G$ and $h = h'$, or 
ii) $g = g'$ and $hh'\in E(G)$ in $H$. In the \emph{strong product}, the vertices $(g, h)$ and $(g', h')$ are adjacent if one of the following holds: 
i) $gg'\in E(G)$ and $h = h'$, 
ii) $g = g'$ and $hh'\in E(G)$, or 
iii) $gg'\in E(G)$ and $hh'\in E(H)$. Finally, two vertices $(g,h)$ and $(g^{\prime },h^{\prime })$ are adjacent in the
\emph{lexicographic product} $G\circ H$ if either $gg^{\prime}\in E(G)$ or if $g=g^{\prime }$ and $hh^{\prime }\in E(H)$. For
$\ast \in \{\Box ,\boxtimes,\circ \}$ we call the product $G\ast H$ \emph{nontrivial}, if both $G$ and $H$ have at least two vertices.
For $h\in V(H)$, $g\in V(G)$, and $\ast \in \{\Box ,\boxtimes ,\circ \}$, call $G^{h}=\{(g,h)\in G\ast H:\,g\in V(G)\}$ a $G$ \emph{layer} in $G\ast H$,
and call $^{g}H=\{(g,h)\in G\ast H:\,h\in V(H)\}$ an $H$ $\emph{layer}$ in $G\ast H$. Note that the subgraph of $G\ast H$ induced on $G^{h}$ is isomorphic to $G$ and the subgraph of $G\ast H$ induced on $^{g}H$ is isomorphic to $H$ for $\ast
\in \{\Box ,\boxtimes ,\circ \}$. Note also that all the three products are associative and only the first two are commutative while the lexicographic product is not. The map
$\pi_{G}:V(G\ast H)\rightarrow V(G)$ defined with $\pi_{G}((g,h))=g$ is called a \emph{projection map onto} $G$ for $\ast \in \{\Box ,\boxtimes,\circ \}$. Similarly, we can define the \emph{projection map onto} $H$. Set finally $[n] = \{ 1, 2, \dots, , n \}$ , where $n \in \mathbb{N}$.

 In this paper, we explore the concept of outer-weakly convex domination number of two graphs using combinatorial properties under some product operations. Additionally, Some bounds on the outer-weakly convex domination number are also determined and we extract some important  properties that suits the interest of graph theorists.


\section{Cartesian Product of Graphs}

In this section, we investigate the outer-weakly convex domination number of two graphs under the Cartesian product operation.
The following observation about dominating sets in the Cartesian product is immediate but is used implicitly in many places.
\begin{observation}\label{obs1}
   Let $G$ and $H$ be two non-trivial connected graphs, and let $S_1$ and $S_2$ be proper subsets of $V(G)$ and $V(H)$, respectively. Then, $S_1 \times S_2$ is not a dominating set in $G \Box H$.
\end{observation}
As mentioned in \cite{r_peleyo}, for any convex set $S$ in $G \Box H$, both projections $\pi_G(S)$ and $\pi_H(S)$ are convex in the corresponding factor graphs. Proposition~\ref{prop_outercon} shows that a similar result holds for outer-convex dominating sets. However, the same type of result need not hold for outer-weakly convex dominating sets, as illustrated in Figure $1$.

\begin{proposition}\label{prop_outercon}
    Let $G$ and $H$ be two non-trivial connected graphs and let $S$ be an outer-convex dominating set in $G\Box H$. Then, both $\pi_G(S)$ and $\pi_H(S)$ are outer-convex dominating sets in $G$ and $H$, respectively.
\end{proposition}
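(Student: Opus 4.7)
The plan is to verify the two defining conditions of outer-weakly convex domination for $\pi_G(S)$, namely that $\pi_G(S)$ is a dominating set of $G$ and that $V(G)\setminus \pi_G(S)$ is weakly convex in $G$; the argument for $\pi_H(S)$ in $H$ is completely symmetric, so I would write it out only on the $G$-side. Throughout I would exploit the fact that a vertex $g$ lies outside $\pi_G(S)$ if and only if \emph{every} vertex in the $H$-layer ${}^{g}H$ lies outside $S$, which is the only bridge that lets us turn statements about the product back into statements about $G$.

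For the domination step, I would pick an arbitrary $g\in V(G)\setminus \pi_G(S)$ and any fixed $h\in V(H)$. Since $g\notin \pi_G(S)$, the vertex $(g,h)$ lies outside $S$, so $S$ must contain a neighbor $(g',h')$ of $(g,h)$ in $G\Box H$. The Cartesian-product adjacency rule forces either $g'=g$, which would place $g$ in $\pi_G(S)$ and contradict the choice of $g$, or $h'=h$ and $gg'\in E(G)$; the latter yields a vertex $g'\in \pi_G(S)$ adjacent to $g$ in $G$, establishing that $\pi_G(S)$ dominates $G$.

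For the weak-convexity step, I would take $g_1,g_2\in V(G)\setminus \pi_G(S)$, fix any $h\in V(H)$, and consider the lifts $(g_1,h),(g_2,h)$, which both avoid $S$ by the remark in the opening paragraph. Using the standard Cartesian-product distance formula $d_{G\Box H}((g_1,h),(g_2,h))=d_G(g_1,g_2)$, the weak convexity of $V(G\Box H)\setminus S$ furnishes a geodesic between these lifts that stays outside $S$. A counting argument on coordinate changes, combined with the fact that the two endpoints share the same $H$-coordinate, shows that the geodesic makes no $H$-move, hence remains inside the layer $G^h$; projecting under $\pi_G$ therefore yields a genuine $g_1$-$g_2$ geodesic in $G$.

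The main obstacle I anticipate is the last inference: converting ``the lifted geodesic avoids $S$ in the layer $G^h$'' into ``its projection avoids $\pi_G(S)$ in $G$''. A priori, a projected intermediate vertex $x_i$ could lie in $\pi_G(S)$ because of an element of $S$ sitting in some other $H$-layer, even though $(x_i,h)\notin S$. To close this gap I would try to exploit the freedom to choose $h\in V(H)$ (picking, say, a layer whose intersection with $S$ is extremal), or to splice together geodesics obtained from different layers, and if necessary to combine the weak-convexity hypothesis with the dominating property to rule out such a bad $x_i$; this is the delicate point where the proof must do real work beyond formal projection.
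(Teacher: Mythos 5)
Your domination argument is correct and is, up to being phrased directly rather than by contradiction, the same as the paper's. The real issue is the weak-convexity half, and the obstacle you flag in your last paragraph is not a technicality you can engineer around: it is a genuine, unfixable gap, because the statement itself is false. The point is exactly the one you identify: $g\in\pi_G(S)$ only means that \emph{some} vertex of the layer $^{g}H$ lies in $S$, so a lifted geodesic in $V(G\Box H)\setminus S$ can pass through $(g,h)$ with $g\in\pi_G(S)$ as long as the offending element of $S$ sits in a different $H$-layer. None of your proposed repairs (choosing an extremal layer $h$, splicing geodesics from different layers, invoking domination) can rule this out, as the following example shows. Take $G=C_4$ with vertices $a,b,c,d$ in cyclic order, $H=K_2$ with vertices $1,2$, and $S=\{(b,1),(d,2)\}$ in $G\Box H\cong Q_3$. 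One checks directly that $S$ dominates $Q_3$ and that every pair of vertices of $V(Q_3)\setminus S$ is joined by a geodesic avoiding $S$ (for the distance-$2$ pairs, one of the two common neighbours always survives; for the distance-$3$ pairs such as $(a,1),(c,2)$, the geodesic $(a,1),(a,2),(b,2),(c,2)$ works). So $S$ is an outer-weakly convex dominating set of $G\Box H$. Yet $\pi_G(S)=\{b,d\}$, and $\{a,c\}$ is not weakly convex in $C_4$ since every $a$--$c$ geodesic passes through $b$ or $d$. Hence $\pi_G(S)$ is not an outer-weakly convex dominating set of $G$.

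For comparison, the paper's own proof of this step asserts that if $g_i\in\pi_G(S)$ then ``the entire $H$ layer $^{g_i}H$ will be in the induced subgraph of $S$,'' which is precisely the false implication above ($\pi_G(S)\ni g_i$ gives only one vertex of that layer in $S$, not all of them); the example shows the conclusion cannot be rescued. So your instinct to treat this inference as the place ``where the proof must do real work'' was right, but the correct resolution is a counterexample rather than a completed proof: only the domination halves of the proposition (that $\pi_G(S)$ and $\pi_H(S)$ are dominating sets) actually hold.
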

\begin{proof}
    Suppose on the contrary that $\pi_G(S)$ is not an outer-convex dominating set in $G$. Then, either $\pi_G(S)$ is not a dominating set in $G$, or $V(G)\setminus \pi_G(S)$ is not convex in $G$, or both. First consider the case that $\pi_G(S)$ is not a dominating set in $G$. Then, there exists a vertex $g\in V(G)\setminus \pi_G(S)$ with $N_G(g)\cap \pi_G(S)=\emptyset$. Hence, for any $h\in V(H)$, the vertex $(g,h)$ is not adjacent to any vertex of $S$. This contradicts the fact that $S$ is a dominating set in $G$.

    On the other hand, suppose that $V(G)\setminus \pi_G(S)$ is not a convex set in $G$. Then, there exist vertices $g', g'' \in V(G) \setminus \pi_G(S)$ and a shortest $g'$–$g''$ path $P$ in $G$ that intersects $\pi_G(S)$.
 We fix $P: g' = g_1, g_2, \ldots, g_r = g''$ with $g_i \in \pi_G(S)$ for some $i \in [r]$. Now, choose $h \in V(H)$ such that $(g_i, h) \in S$.  
Note that both $(g', h)$ and $(g'', h)$ are not in $S$.
This in turn implies that the path $Q: (g',h)=(g_1,h),(g_2,h), \ldots,(g_k,h)= (g'',h)$ is a $(g',h)-(g'',h)$ shortest path in $G\Box H$ containing the vertex $(g_i, h)$, a contradiction to the fact that $S$ is  outer-convex in $G\Box H$. Hence $\pi_G(S)$ is an outer-convex dominating set in $G$. Similarly, we can prove that $\pi_H(S)$ is an outer-convex dominating set in $H$.
\end{proof}


\begin{figure}[H]
    \centering
\includegraphics[width=90mm,scale=1]{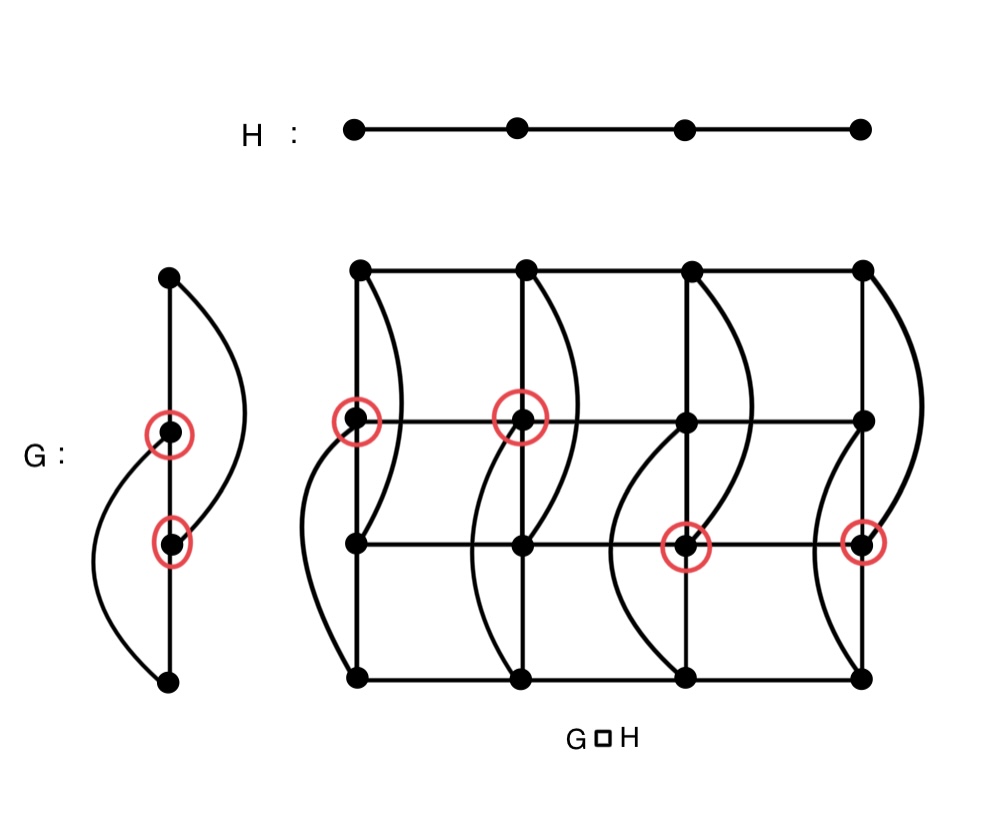}

    \caption{The marked vertices in $G \Box H$ represent an outer-weakly convex dominating set in $G \Box H$. However, their corresponding projection onto $G$ does not form an outer-weakly convex dominating set in $G$.} 
    \label{tol1}
  \centering
   \end{figure}

\begin{theorem}
   Let $G$ and $H$ be two non-trivial connected graphs with order $m$ and $n$, respectively. Then, we have

   $$\min\{m,n\}\leq \widetilde{ \gamma}_{wcon}(G\Box H)\leq \min \{\widetilde{ \gamma}_{wcon}(G)n, \widetilde{\gamma}_{wcon}(H)m\}$$
\end{theorem}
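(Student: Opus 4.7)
The statement splits cleanly into an upper and a lower bound, and I would treat the two halves independently.

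\medskip

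\noindent\textbf{Upper bound.} The plan is to exhibit explicit outer-weakly convex dominating sets of the claimed sizes. Let $D$ be a minimum outer-weakly convex dominating set of $G$ and put $S=D\times V(H)$, so $|S|=\widetilde{\gamma}_{wcon}(G)\,n$. First, I would check domination: for any $(g,h)\notin S$ one has $g\notin D$, so there exists $g'\in D$ with $gg'\in E(G)$, and then $(g',h)\in S$ is adjacent to $(g,h)$ in $G\Box H$. Next, I would verify that the complement $(V(G)\setminus D)\times V(H)$ is weakly convex. Given $(g_1,h_1),(g_2,h_2)$ in this complement, I would choose (using weak convexity of $V(G)\setminus D$ in $G$) a $g_1$-$g_2$ geodesic in $G$ lying entirely in $V(G)\setminus D$; traversing it inside the layer $G^{h_1}$, then traversing any $h_1$-$h_2$ geodesic inside the layer $^{g_2}H$, produces a path in $G\Box H$ of length $d_G(g_1,g_2)+d_H(h_1,h_2)=d_{G\Box H}((g_1,h_1),(g_2,h_2))$, hence a geodesic, all of whose vertices lie in $(V(G)\setminus D)\times V(H)$ since $g_2\notin D$. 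A symmetric choice starting from a minimum outer-weakly convex dominating set of $H$ gives a set of size $\widetilde{\gamma}_{wcon}(H)\,m$, and taking the minimum of the two yields the upper bound.

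\medskip

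\noindent\textbf{Lower bound.} The plan is to argue that for any outer-weakly convex dominating set $S$ of $G\Box H$ at least one of the projections $\pi_G(S)$, $\pi_H(S)$ must be the full vertex set of the corresponding factor. Suppose, for contradiction, that $\pi_G(S)\neq V(G)$ and $\pi_H(S)\neq V(H)$; then I can pick $g\in V(G)\setminus\pi_G(S)$ and $h\in V(H)\setminus\pi_H(S)$. The vertex $(g,h)$ is not in $S$, and its neighbors in $G\Box H$ are either of the form $(g',h)$ with $g'\in N_G(g)$ or $(g,h')$ with $h'\in N_H(h)$. Any vertex of $S$ must have its first coordinate in $\pi_G(S)$ and its second coordinate in $\pi_H(S)$, so the vertices $(g',h)$ fail the second condition (since $h\notin\pi_H(S)$) and the vertices $(g,h')$ fail the first (since $g\notin\pi_G(S)$); hence $(g,h)$ has no neighbor in $S$, contradicting that $S$ dominates $G\Box H$. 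Therefore $\pi_G(S)=V(G)$ or $\pi_H(S)=V(H)$, which gives $|S|\geq |\pi_G(S)|=m$ or $|S|\geq |\pi_H(S)|=n$, and so $|S|\geq \min\{m,n\}$.

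\medskip

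\noindent The main conceptual step is the lower-bound observation that the two projections cannot both miss a vertex; once that is in place, both inequalities follow routinely. The upper-bound calculation does require one small but crucial point, namely that one can route the geodesic in $G\Box H$ first along the $G$-direction (using weak convexity of $V(G)\setminus D$) and then along the $H$-direction while staying inside the $H$-layer $^{g_2}H$ with $g_2\notin D$; this is where the product structure of the candidate set $D\times V(H)$ is being used crucially.
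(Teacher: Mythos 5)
Your proposal is correct and takes essentially the same approach as the paper: the lower bound via the observation that the two projections of a dominating set in $G\Box H$ cannot both be proper subsets, and the upper bound via the product constructions $D\times V(H)$ and $V(G)\times T$. Your write-up is in fact slightly tighter — you certify the geodesic with the distance formula $d_{G\Box H}=d_G+d_H$ instead of a case analysis, and your direct lower-bound argument avoids the paper's small slip of deducing that both projections are proper from $|U|\leq\min\{m,n\}$ where strict inequality is needed.
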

\begin{proof}
To prove the lower bound, suppose for contradiction that there exists an outer-weakly convex dominating set $U$ in $G \Box H$ such that $|U| < \min\{m, n\}$. Then, it follows that $\pi_G(U) \neq V(G)$ and $\pi_H(U) \neq V(H)$. In this case, we can find a pair of vertices $g \in V(G)$ and $h \in V(H)$ such that $V(^gH) \cap U = \emptyset$ and $V(G^h) \cap U = \emptyset$. Then, by the definition of the Cartesian product of graphs, $(g,h) \in V(G \Box H) \setminus U$ and is not adjacent to any vertex in $U$, which contradicts the fact that $U$ is a dominating set in $G \Box H$. Therefore, $|U| \geq \min\{m, n\}$.

For the upper bound, let $S$ and $T$ be minimum outer-weakly convex dominating sets in $G$ and $H$, respectively.\\
{\bf Claim:} $V(G)\times T$ is an outer-weakly convex dominating set in $G\Box H$.\\
First, we prove that $V(G)\times T$ is a dominating set in $G\Box H$. Let $(g,h)$ be any vertex in $V(G\Box H)\setminus (V(G)\times T)$. Since $T$ is a dominating set, there exists at least one vertex, say $h'\in T$ with $hh'\in E(H)$. Then, $(g,h')\in V(G)\times T$ and $(g,h)(g,h')\in E(G\Box H)$. This shows that $V(G)\times T$ is a dominating set in $G\Box H$.

Next, we prove $V(G\Box H)\setminus (V(G)\times T)$ is weakly convex in $G\Box H$. For this, let $(g,h)$ and $(g',h')$ be two vertices in $V(G\Box H)\setminus (V(G)\times T)$. We need at least one shortest path between $(g,h)$ and $(g',h')$ in the induced subgraph of $V(G\Box H)\setminus (V(G)\times T)$. We consider the following three cases.
\\{\bf Case 1}: $g=g'$ and $h\neq h'$. Since both $(g,h)$ and $(g',h')$ are not in $V(G)\times T$, both $h,h'\notin T$. Again, since $V(H)\setminus T$ is weakly convex,  it follows that there exists a $h-h'$ shortest path, say $ h=h_1,h_2,\ldots h_r=h'$ in the induced subgraph of $V(H)\setminus T$. This in turn  implies that the path $(g,h)=(g,h_1),(g,h_2),\ldots, (g,h_r)=(g',h')$ is a $(g,h)-(g',h')$ shortest path in $V(G\Box H)\setminus (V(G)\times T)$.
\\{\bf Case 2}: $g\neq g'$ and $h=h'$. This is similar to Case 1.
\\{\bf Case 3}: $g\neq g'$ and $h\neq h'$. Since $(g,h)$ and $(g',h')$ are not in $V(G)\times T$, both $h,h'\notin T$. Again, since $T$ is an outer-weakly convex set in $H$, $V(H)\setminus T$ is weakly convex in $H$. Then, there exists at least one $h-h'$ shortest path, say $h=h_1,h_2,\ldots h_l=h'$ in the induced subgraph of $V(H)\setminus T$. Let $g=g_1,g_2,\ldots ,g_s=g'$ be any $g-g'$ shortest path in $G$.

 This in turn implies that the path $(g,h)=(g,h_1),(g,h_2),\ldots ,(g,h'), (g_1,h'),(g_2,h'),\\\ldots ,(g_s,h)=(g',h)$ is a  $(g,h)-(g',h')$ shortest path in the induced subgraph of $V(G\Box H)\setminus (V(G)\times T)$.

This proves that $V(G)\times T$ is an outer-weakly convex dominating set in $G\Box H$. Similarly, we can prove that $S\times V(H)$ is an outer-weakly convex dominating set in $G\Box H$. This completes the proof.
\end{proof}

Consider integers $m\geq n\geq 2$. Then, for any nontrivial connected graph $H$ with $\widetilde{\gamma}_{wcon}(H)\geq  \left\lceil \frac{n}{m} \right\rceil $, the above theorem implies that $\widetilde{ \gamma}_{wcon}(K_n\Box H)=n$.


\section{Strong Product of Graphs}

\begin{theorem}\label{strong}
Let $G$ and $H$ be two non-trivial connected graphs with order $m$ and $n$, respectively. Then, we have

   $$\max\{\gamma(G),\gamma(H)\}\leq \widetilde{ \gamma}_{wcon}(G\boxtimes H)\leq \min \{\widetilde{ \gamma}_{wcon}(G)n, \widetilde{\gamma}_{wcon}(H)m\}.$$   
\end{theorem}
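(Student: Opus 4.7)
The plan is to prove the two inequalities separately, adapting the strategy of the Cartesian case to exploit the richer adjacency of $G\boxtimes H$.

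For the lower bound $\max\{\gamma(G),\gamma(H)\}\le \widetilde{\gamma}_{wcon}(G\boxtimes H)$, let $U$ be any outer-weakly convex dominating set in $G\boxtimes H$; I will show that $\pi_G(U)$ is a dominating set in $G$. If $g\in V(G)\setminus \pi_G(U)$, then the whole layer $^{g}H$ is disjoint from $U$, so for any $h\in V(H)$ the vertex $(g,h)\notin U$ must be dominated by some $(g',h')\in U$. Since $g=g'$ would force $g\in\pi_G(U)$, the strong-product edge condition yields $gg'\in E(G)$, exhibiting a $\pi_G(U)$-neighbor of $g$ in $G$. Hence $|U|\ge|\pi_G(U)|\ge \gamma(G)$, and the symmetric argument gives $|U|\ge \gamma(H)$.

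For the upper bound, let $T$ be a minimum outer-weakly convex dominating set of $H$, and I claim $D:=V(G)\times T$ is an outer-weakly convex dominating set of $G\boxtimes H$; by symmetry, the set $S\times V(H)$ with $S$ minimum in $G$ handles the other quantity inside the minimum. Domination is immediate: any $(g,h)\notin D$ has $h\notin T$, so a $T$-neighbor $h'$ of $h$ yields $(g,h')\in D$ adjacent to $(g,h)$ in $G\boxtimes H$. The substantive step is showing $V(G\boxtimes H)\setminus D$ is weakly convex. Given $(g,h),(g',h')$ in this set, I would use the identity $d_{G\boxtimes H}((g,h),(g',h'))=\max\{d_G(g,g'),d_H(h,h')\}$ and combine a $G$-geodesic $g=g_0,\dots,g_s=g'$ with an $h$-$h'$ geodesic $h=h_0,\dots,h_k=h'$ taken inside $V(H)\setminus T$ (which exists because $T$ is outer-weakly convex in $H$): if $s\ge k$, assign the $i$-th vertex of the path the $H$-coordinate $h$ for $i\le s-k$ and the coordinate $h_{i-(s-k)}$ thereafter, and reverse the roles when $s<k$. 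Consecutive pairs then differ in a single coordinate or in both simultaneously, so each consecutive pair is adjacent in the strong product.

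The main obstacle I anticipate is the bookkeeping: confirming that this sequence has length exactly $\max\{s,k\}$, that its vertices are distinct, and that every $H$-coordinate lies in $V(H)\setminus T$ so that the geodesic avoids $D$. This is the point at which the ``diagonal'' edges of $G\boxtimes H$ are essential, since they let a single step in the product advance both coordinates at once and thus match the max-distance formula. Once these verifications are made, the degenerate cases $g=g'$ or $h=h'$ reduce to walking along a single coordinate and are subsumed by the same template.
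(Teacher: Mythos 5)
Your proposal is correct and follows essentially the same route as the paper: the lower bound via showing the projections of an outer-weakly convex dominating set are dominating sets, and the upper bound via verifying that $S\times V(H)$ and $V(G)\times T$ are outer-weakly convex dominating sets, using diagonal steps to combine a geodesic avoiding $S$ (resp. $T$) with an arbitrary geodesic in the other factor. Your explicit appeal to $d_{G\boxtimes H}((g,h),(g',h'))=\max\{d_G(g,g'),d_H(h,h')\}$ makes the geodesic verification slightly cleaner than the paper's case analysis, but the underlying construction is the same.
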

\begin{proof}
   Let $S$ be an outer-weakly convex dominating set in $G\boxtimes H$. First, we prove that $\pi_G(S)$ is a dominating set in $G$. Suppose on the contrary that $\pi_G(S)$ is not a dominating set in $G$. Then, there exists $g\in V(G)\setminus \pi_G(S)$ with $N_G(g)\cap \pi_G(S)=\emptyset$. This implies that any $h\in V(H)$, the vertex $(g,h)$ is not adjacent to any vertex of $\pi_G(S)\times \pi_h(S)$. Hence, we have $N_{G\boxtimes H}(g,h)\cap S=\emptyset$. However, this contradicts the fact that $S$ is an outer-weakly convex dominating set in $G\boxtimes H$. Therefore, $\pi_G(S)$ is a dominating set in $G$. By similar arguments, we conclude that $\pi_H(S)$ is a dominating set of $H$. This shows that every outer-weakly convex dominating set in $G\boxtimes H$ has cardinality at least $\max\{\gamma(G),\gamma(H)\}$.

   For the upper bound, let $S$ be an outer-convex dominating set in $G$. We first prove the following claim.\\
    {\bf Claim:} $S\times V(H)$ is an outer-weakly convex dominating set in $G\boxtimes H$. \\
     Let $(g, h),(g',h')\in V(G\boxtimes H)\setminus (S\times V(H))$. We prove that there exists a shortest path joining $(g,h)$ and $(g',h')$ in the induced subgraph of $V(G\boxtimes H)\setminus (S_1\times S_2)$. If $(g,h)(g',h')\in E(G\boxtimes H)$, then we are done. Otherwise, we consider the following three cases.
\\{\bf Case 1}: $g= g'$ and $hh'\notin E(H)$.
In this case, both the vertices $(g,h)$ and $(g',h')$ are in the same $H$-layer, $^gH$. Recall that $g\notin S$. Let $h=h_1,h_2,\ldots h_r=h'$ be a $h-h'$ shortest path in $H$. Then, the path $(g,h)=(g,h_1),(g,h_2),\ldots, (g,h_r)=(g',h')$ is a $(g,h)-(g',h')$ shortest path in the subgraph induced by $V(G\boxtimes H)\setminus (S\times V(H))$.  
 {\bf Case 2}: $gg'\notin E(G)$ and $h= h'$.   Since $g,g'\notin S$, then there exists a $g-g'$ shortest path $g=g_1,g_2,\ldots,g_r=g'$ in the induced subgraph of $V(G)\setminus S$. Then, as above, the path $(g,h)=(g_1,h),(g_2,h),\ldots, (g_r,h)=(g',h')$ is a $(g,h)-(g',h')$ shortest path in the subgraph induced by $V(G\boxtimes H)\setminus (S\times V(H))$.
\\{\bf Case 3}: $g\neq g'$ and  $h\neq h'$. Since $S$ is an outer-weakly convex set in $G$ and $g,g'\notin S$, then there exists a $g-g'$ shortest path $P:g=g_1,g_2,\ldots,g_r=g'$ in the induced subgraph of $V(G)\setminus S$. Now, consider a $h-h'$ shortest path, say $Q:h=h_1,h_2,\ldots,h_s=h'$ in $H$. Then, $V(P)\times V(Q)\subseteq V(G\boxtimes H)\setminus (S\times V(H))$. If $r=s$, then the path $R:(g,h)=(g_1,h_1),(g_2,h_2),\ldots,(g_r,h_r)=(g',h')$ will be a  $(g,h)-(g',h')$ shortest path in the induced subgraph of $V(G\boxtimes H)\setminus (S\times V(H))$. So, assume without loss of generality that $r<s$. Then, the path 
       $T:(g,h)=(g_1,h_1),(g_2,h_2),\ldots,(g_r,h_r),(g_{r+1},h'),(g_{r+2},h'),\ldots (g',h')$ will be a $(g,h)-(g',h')$ shortest path in the induced subgraph of $V(G\boxtimes H)\setminus (S\times V(H))$.
       
        From all the above cases, we conclude that  $V(G\boxtimes H)\setminus S\times V(H)$ is a weakly convex set in $G\boxtimes H$.

     Next, we prove that $S\times V(H)$ is a dominating set in $G\boxtimes H$. Let $(g,h)\notin S\times V(H)$. Then, $g\notin S$. Since $S$ is an outer-weakly convex dominating set in $G$, there exists a vertex $g'\in S$ such that $gg'\in E(G)$. Now, it is clear that for any $h\in V(H)$, $(g,h)(g',h)\in E(G\boxtimes H)$ and $(g',h)\in S\times V(H)$. Hence, $S\times V(H)$ is a dominating set in $G\boxtimes H$. Hence the claim follows.
    
    As in the above claim, we can prove that for any outer-weakly convex dominating set $T$ of $H$, the set $V(G)\times T$ is an outer-weakly convex dominating set of $G \boxtimes H$. This proves the upper bound.

\end{proof}
The following theorem gives the sharpness of the lower bound in Theorem~\ref{strong}.
\begin{theorem}
    Let $G$ be a non-trivial connected graph and $n$ be any positive integer greater than or equal to 2. Then,  $\widetilde{\gamma}_{wcon}(G\boxtimes K_n)=\gamma(G)$.
\end{theorem}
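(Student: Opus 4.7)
The plan is to prove the two inequalities $\widetilde{\gamma}_{wcon}(G \boxtimes K_n) \le \widetilde{\gamma}_{wcon}(G)$ and $\widetilde{\gamma}_{wcon}(G \boxtimes K_n) \ge \widetilde{\gamma}_{wcon}(G)$ separately; together they sharpen both bounds of Theorem~\ref{strong} in the special case $H = K_n$.

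For the upper bound, I would fix an arbitrary vertex $h_0 \in V(K_n)$ and a smallest outer-weakly convex dominating set $S$ of $G$, and verify that $S' := S \times \{h_0\}$ is outer-weakly convex dominating in $G \boxtimes K_n$. Domination is immediate: every $(g,h) \notin S'$ is adjacent to some $(g',h_0) \in S'$, using $g' = g$ when $g \in S$ (so $h \ne h_0$, hence $h h_0 \in E(K_n)$) and a $G$-neighbour $g' \in S$ of $g$ otherwise. For weak convexity of $V(G \boxtimes K_n) \setminus S'$, I would split on $(g_1,h_1), (g_2,h_2) \notin S'$: if $g_1 = g_2$ they are adjacent; if at least one of $h_1, h_2$ differs from $h_0$, say $h_1 \ne h_0$, then any $g_1 g_2$-geodesic $u_0, u_1, \ldots, u_d$ in $G$ lifts to $(g_1,h_1),(u_1,h_1),\ldots,(u_{d-1},h_1),(g_2,h_2)$ inside $V(G \boxtimes K_n) \setminus S'$; and when $h_1 = h_2 = h_0$, membership outside $S'$ forces $g_1,g_2 \notin S$, so the weak convexity of $V(G) \setminus S$ in $G$ supplies a $g_1 g_2$-geodesic whose vertices lift at constant second coordinate $h_0$.

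For the lower bound, my plan is to argue that for any outer-weakly convex dominating set $S$ of $G \boxtimes K_n$, the projection $T := \pi_G(S)$ is outer-weakly convex dominating in $G$, which delivers $\widetilde{\gamma}_{wcon}(G) \le |T| \le |S|$. Domination of $T$ in $G$ is already contained in the proof of Theorem~\ref{strong}. To establish weak convexity of $V(G) \setminus T$, I would fix $g_1, g_2 \notin T$, note that the $K_n$-layers $^{g_1}K_n$ and $^{g_2}K_n$ are entirely disjoint from $S$, pick any $h \in V(K_n)$, and apply weak convexity of $V(G \boxtimes K_n) \setminus S$ to the pair $(g_1,h),(g_2,h)$; since $d_{G \boxtimes K_n}((g_1,h),(g_2,h)) = d_G(g_1,g_2)$, a length-count on the strong product forces the projection of the resulting product geodesic to be a genuine $g_1 g_2$-geodesic $u_0, u_1, \ldots, u_d$ in $G$.

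The hard part is closing this last step: from $(u_i, w_i) \notin S$ along the product geodesic one does not automatically obtain $u_i \notin T$, because another vertex $(u_i, w')$ of the same column could still lie in $S$ and so place $u_i$ inside $\pi_G(S)$. I expect to address this by exploiting the completeness of $K_n$ together with the assumption $n \ge 2$: one has considerable freedom to slide the second coordinates $w_i$ at intermediate vertices of the product geodesic without changing either its length or its projection, and the structure of $S$ in fully covered columns can be used to reselect the underlying $G$-geodesic so that every intermediate vertex lies in $V(G) \setminus T$.
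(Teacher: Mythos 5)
Your upper bound is correct and is essentially the paper's own construction: $S\times\{h_0\}$ for a minimum outer-weakly convex dominating set $S$ of $G$, with interior vertices of lifted geodesics pushed to a second coordinate different from $h_0$. The problem is the lower bound, and the gap you honestly flag at the end is not closable, because the inequality $\widetilde{\gamma}_{wcon}(G)\leq \widetilde{\gamma}_{wcon}(G\boxtimes K_n)$ --- and hence the theorem as stated --- is false. Take $G=C_6$ with vertices $v_1,\dots,v_6$ and $n=2$ with $V(K_2)=\{a,b\}$. A weakly convex subset of $C_6$ with at least three vertices must consist of consecutive vertices, and at most four of them; the only choice whose complement dominates is a set of four consecutive vertices, so $\widetilde{\gamma}_{wcon}(C_6)=4$. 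On the other hand, $S=\{(v_1,a),(v_4,a)\}$ is an outer-weakly convex dominating set of $C_6\boxtimes K_2$: it dominates, and for any two vertices of its complement one lifts a $C_6$-geodesic giving every interior vertex second coordinate $b$; since $d_{G\boxtimes K_2}((g,h),(g',h'))=\max\{d_G(g,g'),d_{K_2}(h,h')\}$ this lift is a geodesic, and it misses $S$. Hence $\widetilde{\gamma}_{wcon}(C_6\boxtimes K_2)=2<4$. This is precisely the phenomenon you anticipated: $\pi_G(S)=\{v_1,v_4\}$ is dominating but not outer-weakly convex dominating in $C_6$ (the unique $v_3,v_5$-geodesic passes through $v_4$), because $S$ occupies only part of the columns over $v_1$ and $v_4$ and the product geodesics slip past it through the $b$-level. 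No amount of ``sliding second coordinates'' can repair this, since the underlying $G$-geodesic genuinely has to pass through $\pi_G(S)$.

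For what it is worth, the paper's own proof does not close this gap either: it only shows that $\pi_G(S)$ is a dominating set of $G$, which yields $\widetilde{\gamma}_{wcon}(G\boxtimes K_n)\geq \gamma(G)$, and then concludes by citing the lower bound $\max\{\gamma(G),\gamma(K_n)\}$ of Theorem~\ref{strong}; that bound is in general strictly smaller than $\widetilde{\gamma}_{wcon}(G)$ (for $C_6$ it equals $2$ while $\widetilde{\gamma}_{wcon}(C_6)=4$), so the final equality is a non sequitur. The only part of the statement that survives is the upper bound $\widetilde{\gamma}_{wcon}(G\boxtimes K_n)\leq \widetilde{\gamma}_{wcon}(G)$, which both you and the paper prove correctly.
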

\begin{proof}

    Let $S$ be a  dominating set in $G$ with $|S|=\gamma(G)$. Fix $h\in V(K_n)$. In the following, we prove that the set $S'=S\times \{h\}$ an outer-weakly convex dominating set in $G\boxtimes K_n$.
First, we prove $S'$ is a dominating set in $G\boxtimes K_n$. Let $(g,h')\in V(G\boxtimes K_n)\setminus S'$. If $g\in S$, then the vertex $(g, h')$ is dominated by the vertex $(g,h)$. So,  assume $g\notin S$.  Since $S$ is a dominating set in $G$, there exists a vertex $g'\in S$ with $gg'\in E(G)$. Then, $(g', h)\in S'$ and $(g,h')(g',h)\in E(G\boxtimes K_n)$. This proves $S'$ is a dominating set in $G\boxtimes H$.

 Next, we prove $S'$ is an outer-weakly convex set in $G\boxtimes K_n$. Let $(g_1,h_1),(g_2,h_2)\in V(G\boxtimes K_n)\setminus S'$. Let $P: g_1=u_1,u_2,\ldots ,u_r=g_2$ be a $g_1-g_2$ shortest path in $G$. Choose $h'$ in $V(K_n)$ so that $h\neq h'$. Then, the path $Q: (g_1,h_1)=(u_1,h_1),(u_2,h'),(u_3,h')\\\ldots,(u_{r-1},h'),(u_r,h_2)=(g_2,h_2)$ is a $(g_1,h_1)-(g_2,h_2)$ shortest path in $G\boxtimes K_n$ with $V(Q)\cap S'=\emptyset$. Therefore, $S'$ is an outer-weakly convex dominating set in $G\boxtimes K_n$. Hence, by Theorem \ref{strong}, $\widetilde{\gamma}_{wcon}(G\boxtimes K_n)=\gamma(G)$. 

\end{proof}

\begin{proposition}\label{Kmn}
   Let $G$ be a non-trivial connected graph. Then, $\widetilde{\gamma}_{wcon}(G\boxtimes K_{m,n})\leq 2{\gamma}(G)$, where $m,n\geq2$.
\end{proposition}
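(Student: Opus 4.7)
The plan is to exhibit an outer-weakly convex dominating set of $G\boxtimes K_{m,n}$ of size $2\widetilde{\gamma}(G)$. I would start from a set $D\subseteq V(G)$ realizing the invariant $\widetilde{\gamma}(G)$, write $V(K_{m,n})=A\cup B$ for the bipartition with $|A|=m\ge 2$ and $|B|=n\ge 2$, fix a vertex $a\in A$ and a vertex $b\in B$, and take
$$S \;=\; D\times\{a,b\}.$$
Then $|S|=2|D|=2\widetilde{\gamma}(G)$, so the proposition reduces to checking that $S$ is a dominating set of $G\boxtimes K_{m,n}$ and that $V(G\boxtimes K_{m,n})\setminus S$ is weakly convex.

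For the domination step, I would pick an arbitrary $(g,h)\notin S$ and produce a neighbor of it in $S$ by cases. If $g\notin D$, dominance of $D$ in $G$ yields $g'\in D$ adjacent to $g$; pairing $g'$ with $a$ (when $h\in B\cup\{a\}$) or with $b$ (when $h\in A\setminus\{a\}$) gives a strong-product neighbor $(g',\cdot)\in S$ of $(g,h)$. If instead $g\in D$ and $h\notin\{a,b\}$, then $h$ lies in exactly one of the two parts, and the element of $\{a,b\}$ in the opposite part is adjacent to $h$ in $K_{m,n}$, producing $(g,a)$ or $(g,b)\in S$ as a neighbor of $(g,h)$ inside the layer ${}^{g}K_{m,n}$.

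For the weak convexity of the complement, given $(g_1,h_1),(g_2,h_2)\notin S$, the strong-product distance is $k=\max\{d_G(g_1,g_2),\,d_{K_{m,n}}(h_1,h_2)\}$, and I would construct a shortest path of length $k$ lying in $V(G\boxtimes K_{m,n})\setminus S$ by the following recipe: fix a shortest $G$-path $g_1=u_0,u_1,\dots,u_k=g_2$ (padded with repeats if $d_G(g_1,g_2)<k$), set $v_0=h_1$ and $v_k=h_2$, and select each interior coordinate $v_i$ from the ``safe'' set $V(K_{m,n})\setminus\{a,b\}$ so that $(u_i,v_i)\notin S$ regardless of whether $u_i\in D$. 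The hypothesis $m,n\ge 2$ keeps both $A\setminus\{a\}$ and $B\setminus\{b\}$ nonempty, which provides enough representatives of each part to route the $K_{m,n}$-walk adjacent-to-adjacent while avoiding $\{a,b\}$ in the interior.

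The main obstacle, and the reason $\widetilde{\gamma}(G)$ is the correct parameter rather than merely $\gamma(G)$, is the boundary case where both $h_1$ and $h_2$ lie in $\{a,b\}$ (which forces $g_1,g_2\notin D$), the two $H$-coordinates differ, and $d_G(g_1,g_2)=2$. There $k=2$, and since $N_{K_{m,n}}[a]\cap N_{K_{m,n}}[b]=\{a,b\}$, every shortest $(g_1,h_1)$--$(g_2,h_2)$ path is forced through an intermediate vertex $(u,v)$ with $v\in\{a,b\}$ and $u$ a common $G$-neighbor of $g_1$ and $g_2$; to keep $(u,v)\notin S$ the common neighbor $u$ must be chosen outside $D$. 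I would close this gap by invoking the structural property built into the minimizer $D$ for $\widetilde{\gamma}(G)$, which supplies such a common $G$-neighbor of $g_1$ and $g_2$ in $V(G)\setminus D$; all remaining configurations (at least one of $h_1,h_2$ outside $\{a,b\}$, or $g_1=g_2$, or $d_G(g_1,g_2)\ne 2$) are then handled routinely by the safe-layer construction described above.
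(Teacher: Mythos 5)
Your proposal matches the paper's proof essentially step for step: the paper likewise takes $S'=S\times\{h,h'\}$ for an edge $hh'$ of $K_{m,n}$ (one endpoint in each part of the bipartition), proves domination by the same case split on whether the $G$-coordinate lies in the base set and whether the $K_{m,n}$-coordinate lies in $\{h,h'\}$, and establishes weak convexity of the complement by routing geodesics through a third vertex $h_1\notin\{h,h'\}$ (available since $m,n\geq 2$), falling back---precisely at your ``obstacle'' case---on a $G$-geodesic that avoids $S$. One remark: that fallback is legitimate only if $V(G)\setminus S$ is weakly convex, i.e., only if the undefined symbol $\widetilde{\gamma}(G)$ is read as $\widetilde{\gamma}_{wcon}(G)$, so your diagnosis that plain $\gamma(G)$ cannot suffice is exactly right (e.g.\ it fails for $P_3\boxtimes K_{2,2}$), and it flags a genuine inconsistency in the paper's own write-up, which opens with ``$|S|=\gamma(G)$'' yet later invokes the outer-weakly convex property of $S$.
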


\begin{proof}
    Let $S$ be a dominating set in $G$ with $|S|=\gamma(G)$ and let $hh'\in E(K_{m,n})$. Consider the set $S'=S\times \{h,h'\}$. In the following we prove that $S'$ is an outer-weakly convex dominating set in $G\boxtimes K_{m,n}$. 
To prove that $S'$ is a dominating set in $G \boxtimes K_{m,n}$, consider an arbitrary vertex $(x,y) \in V(G \boxtimes K_{m,n}) \setminus S'$.
Then, we consider the following three cases.   
\\{\bf Case 1}: $x\notin S$ and $y\in \{h,h'\}$. Since $S$ is an outer-weakly convex dominating set in $G$, there exists a vertex $x'\in S$ with $xx'\in E(G)$. Assume for a moment that $y = h$. Then, $(x', h) \in S'$ and $(x, y)(x', h) \in E(G \boxtimes K_{m,n})$.  
Similarly, the case $y = h'$ can be proved.
\\{\bf Case 2}: Let $x \in S$ and $y \notin \{h, h'\}$. Since $hh'\in E(K_{m,n})$, either $yh \in E(K_{m,n})$ or $yh' \in E(K_{m,n})$. Hence, either $(x,y)(x,h) \in E(G \boxtimes K_{m,n})$ or $(x,y)(x,h') \in E(G \boxtimes K_{m,n})$, or both. Since both $(x,h)$ and $(x,h')$ are in $S'$, the vertex $(x,y)$ is dominated by $S'$ in $G \boxtimes K_{m,n}$.
\\{\bf Case 3}: $x\notin S$ and $y\notin \{h,h'\}$.
Since $S$ is a dominating set in $G$, there exists a vertex $x'\in S$ with $xx'\in E(G)$. As in the previous case, either $hy\in E(K_{m,n})$ or $h'y\in E(K_{m,n})$. Assume without loss of generality that $hy\in E(K_{m,n})$. Then, $(x,y)(x',h)\in E(G\boxtimes K_{m,n})$ and $(x',h)\in S'$.

Hence, $S'$ is a dominating set in $G\boxtimes K_{m,n}$.

Next, we prove that $S'$ is an outer-weakly convex set in $G\boxtimes K_{m,n}$. Choose two arbitrary non-adjacent vertices $(x, y)$ and $(x',y')$ in $V(G\boxtimes K_{m,n})\setminus S'$. We consider the following three cases.\\
    {\bf Case 1.} Both $y, y'\in \{h, h'\}$.  This turn implies that $x$ and $x'$ are distinct non-adjacent vertices in $G$ such that $x,x'\notin S$. Since $S$ is an outer-weakly convex set in $G$, there exists an $x- x'$ shortest path in $G$, say $P$, such that $V(P)\cap S=\emptyset$. Fix $P:x=x_0,x_1,\dots,x_k=x'$. Then, the path $Q:(x,y)=(x_0,y),(x_1,y),\dots,(x_{k-1},y)(x_k,y')=(x',y')$ is an $(x,y)-(x',y')$ shortest path in $G\boxtimes K_{m,n}$ with $V(Q)\cap S'=\emptyset$.\\
    {\bf Case 2.}  $y\notin \{h,h'\}$ and $y'\notin \{h,h'\}$. Let $P:x=x_0,x_1,\dots,x_k=x'$ be any $x-x'$ shortest path in $G$. First, consider the case $y=y'$. Then, choose the $(x,y)-(x',y')$ shortest path $Q:(x,y)=(x_0,y), (x_1,y),\dots,(x_k,y)=(x',y')$ in $G\boxtimes K_{m,n}$, and hence it is clear that $V(Q)\cap S'=\emptyset$. Next, consider the case that $y$ and $y'$ are adjacent vertices in $G\boxtimes K_{m,n}$. Then, the path $Q:(x,y)=(x_0,y),(x_1,y,\dots,(x_{k-1},y),(x_k,y')=(x', y')$ is an $(x,y)-(x',y')$ shortest path in $G\boxtimes K_{m,n}$ with $V(Q)\cap S'=\emptyset$. Hence, in the following we assume that the vertices $y$ and $y'$ are distinct non-adjacent vertices in $K_{m,n}$. Since $m\geq n\geq 2$, we can choose a vertex $h_1$ distinct from both $h$ and $h'$ such that $h_1$ is adjacent to both $y$ and $y'$ in $K_{m,n}$. Now, when $k\geq 2$, choose the path $Q:(x,y)=(x_0,y), (x_1,h_1), (x_2,y'), \dots,(x_{k-1},y')=(x',y')$.  On the other hand, when $k=1$, choose $Q:(x,y)=(x_0,y),(x_1,h_1),(x_1,y')$; and when $k=0$, choose $Q:(x,y)=(x_0,y),(x_0,h_1),(x_0,y')=(x',y')$. Then, in all cases, $Q$ is an $(x,y)-(x',y')$ shortest path in $G\boxtimes K_{m,n}$ with $V(Q)\cap S'=\emptyset$.\\
    {\bf Case 3.}  $y\in \{h,h'\}$ and $y'\notin \{h,h'\}$. Assume that $y=h$. Then, $x\notin S$. Let $P:x=x_0,x_1,\dots,x_k=x'$ be any $x-x'$ shortest path in $G$. When $y$ and $y'$ are adjacent in $K_{m,n}$, then  the path $Q:(x,y)=(x_0,h),(x_1, y'), (x_2, y'),\dots,(x_k,y')=(x',y')$ is an $(x,y)-(x',y')$ shortest path in $G\boxtimes K_{m,n}$ with $V(Q)\cap S'=\emptyset$. On the other hand, when $y$ and $y'$ are not adjacent in $K_{m,n}$, then there we can choose a vertex $h_1$ distinct from $h'$ such that $h_1$ is adjacent to both $y$ and $y'$ in $K_{m,n}$. Then, as in Case 2, we can choose a  $(x,y)-(x',y')$ shortest path $Q$ in $G\boxtimes K_{m,n}$ with $V(Q)\cap S'=\emptyset$.

    Thus $S'$ is an outer-weakly convex set of $G\boxtimes K_{m,n}$. This completes the proof.

\end{proof}

If $G=K_r$ (complete graph on $r$ vertices), then any single vertex is not a dominating set in $G\boxtimes K_{m,n}$, and hence, $\widetilde{\gamma}_{wcon}(G\boxtimes K_{m,n})=2{\gamma}(G)=2$. Therefore, the bound in Proposition~\ref{Kmn} is sharp.

\section{Lexicographic Product of Graphs}

As in the case of the Cartesian product, the following proposition shows that the $G$-projection of any outer-convex dominating set of $G \circ H$ is an outer-convex dominating set in $G$. However, the same type of result does not necessarily hold for outer-weakly convex dominating sets, as illustrated in Figure~2.

\begin{proposition}\label{pro:lex1}
    Let $G$ and $H$ be two non-trivial connected graphs. If $S$ is an outer-convex dominating set in $G\circ H$, then $\pi_G(S)$ is an outer-convex dominating set in $G$.
\end{proposition}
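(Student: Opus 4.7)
The plan is to verify the two defining properties for $\pi_G(S)$ separately, mirroring the projection arguments already used in the Cartesian and strong product sections: (i) that $\pi_G(S)$ dominates $G$, and (ii) that $V(G)\setminus \pi_G(S)$ is weakly convex in $G$. Both parts proceed by contradiction, lifting a counterexample in $G$ to one in $G\circ H$. For (i), suppose $\pi_G(S)$ fails to dominate, so some $g\in V(G)\setminus \pi_G(S)$ satisfies $N_G(g)\cap \pi_G(S)=\emptyset$, and pick any $h\in V(H)$. Every neighbor of $(g,h)$ in $G\circ H$ has first coordinate in $\{g\}\cup N_G(g)$, which is disjoint from $\pi_G(S)$; hence none of those neighbors lies in $S$, contradicting that $S$ is a dominating set in $G\circ H$.

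For (ii), I would take arbitrary $g_1,g_2\in V(G)\setminus \pi_G(S)$ with $g_1\neq g_2$, fix any $h\in V(H)$, and observe that $(g_1,h),(g_2,h)\in V(G\circ H)\setminus S$ since their first coordinates lie outside $\pi_G(S)$. The weak convexity of $V(G\circ H)\setminus S$ then yields a geodesic $Q$ from $(g_1,h)$ to $(g_2,h)$ disjoint from $S$. The standard distance identity $d_{G\circ H}((g_1,h),(g_2,h))=d_G(g_1,g_2)$ for the lexicographic product forces the projection $\pi_G(Q)$ to be a walk of length $d_G(g_1,g_2)$ from $g_1$ to $g_2$; any consecutive repetition in $\pi_G(Q)$ would produce a strictly shorter walk, so $\pi_G(Q)$ is in fact a $g_1$-$g_2$ geodesic in $G$. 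If one can additionally show that no vertex of $\pi_G(Q)$ lies in $\pi_G(S)$, then $\pi_G(Q)$ is the required geodesic in $V(G)\setminus \pi_G(S)$ and the proof concludes.

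The main obstacle is exactly this last step. The hypothesis $Q\cap S=\emptyset$ only yields $(u_i,t_i)\notin S$ for each intermediate vertex of $Q$, whereas what is needed is the stronger conclusion $u_i\notin \pi_G(S)$; a priori, an intermediate first coordinate $u_i$ could lie in $\pi_G(S)$ while the specific second coordinate $t_i$ selected along $Q$ happens to keep $(u_i,t_i)$ outside $S$. Closing this gap is the delicate part, and it will require choosing $h$ and controlling the intermediate $H$-coordinates of $Q$ so that every $u_i\in \pi_G(S)$ is forced to contribute a vertex of $S$ to $Q$ --- for instance by choosing $h=h^\ast$ with $(g,h^\ast)\in S$ for every $g\in \pi_G(S)$, or by adapting the fiber-based argument used for the Cartesian product to the more flexible setting of $G\circ H$. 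This is where the proof will concentrate most of its effort.
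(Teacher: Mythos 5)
Your domination half is correct and is essentially the paper's own argument: every neighbour of $(g,h)$ in $G\circ H$ has first coordinate in $N_G(g)$ or equals $g$ in the first coordinate, so a non-dominated vertex of $G$ lifts to a non-dominated vertex of $G\circ H$. For the weak-convexity half, the obstacle you isolate --- that $Q\cap S=\emptyset$ only gives $(u_i,t_i)\notin S$, not $u_i\notin\pi_G(S)$ --- is exactly the right thing to worry about. The paper's proof does not close this gap either: it tacitly treats ``$g_i\in\pi_G(S)$'' as if it forced every vertex of the fibre $^{g_i}H$ met by the geodesic to lie in $S$, whereas membership of $g_i$ in $\pi_G(S)$ only guarantees $(g_i,s)\in S$ for \emph{some} $s\in V(H)$.

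The gap cannot be closed, because the proposition is false. Take $G=P_3$ with vertices $a,b,c$ and edges $ab,bc$, take $H=K_2$ with vertices $1,2$, and let $S=\{(b,1)\}$. Then $S$ dominates $G\circ H$ (the vertex $(b,1)$ is adjacent to $(a,1),(a,2),(b,2),(c,1),(c,2)$), and $V(G\circ H)\setminus S$ is weakly convex: the only non-adjacent pairs in the complement are $(a,i),(c,j)$, and $(a,i),(b,2),(c,j)$ is a geodesic avoiding $S$. So $S$ is an outer weakly convex dominating set of $G\circ H$. But $\pi_G(S)=\{b\}$, and $V(G)\setminus\{b\}=\{a,c\}$ is not weakly convex in $P_3$, since the unique $a$--$c$ geodesic passes through $b$. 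Note that this also defeats your proposed repair of choosing $h^{\ast}$ with $(g,h^{\ast})\in S$ for all $g\in\pi_G(S)$: here $h^{\ast}=1$ has that property, yet the geodesic from $(a,1)$ to $(c,1)$ may still detour through $(b,2)\notin S$, because nothing constrains the second coordinates of the interior vertices of a geodesic in $G\circ H$. So you should not try to complete this proof; the correct conclusion one can extract from your argument is only that $\pi_G(S)$ is a dominating set of $G$ (which is what the analogous statement for the strong product, Theorem~\ref{strong}, actually uses).
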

\begin{proof}
    Let $S$ be an outer-convex dominating set in $G\circ H$. Suppose on the contrary that $\pi_G(S)$ is not an outer-convex dominating set in $G$. Then, either $\pi_G(S)$ is not a dominating set in $G$, or $V(G)\setminus \pi_G(S)$ is not convex set in graph $G$, or both. 
    
First, assume $\pi_G(S)$ is not a dominating set in $G$. Then, there exists a vertex $g\in V(G)\setminus \pi_G(S)$ with $N_G(g)\cap \pi_G(S)=\emptyset$. Then, for any $h\in V(H)$, the vertex $(g,h)\in V(G\circ H)\setminus S$ and $N_{G\circ H}(g,h)\cap S=\emptyset$. This contradicts the fact that $S$ is an outer convex dominating set in $G\circ H$. Therefore, $\pi_G(S)$ is a dominating set in $G$. 

Next, assume that $V(G)\setminus \pi_G(S)$ is not convex in $G$. Then, there exist two vertices $g',g''\in V(G)\setminus \pi_G(S)$ and a $g'-g''$ shortest path in $G$ that intersects $\pi_G(S)$. We fix $P:g'=g_1,g_2,\ldots, g_r=g''$   with $g_i\in \pi_G(S)$ for some $i\in [r]$. Now, choose $h\in V(H)$ such that $(g_i, h)\in S$.  Then both $(g',h)$ and $(g'',h)$ are not in $S$. By the definition of lexicographic product of graphs, the path $Q: (g',h)=(g_1,h),(g_2,h), \ldots,(g_k,h)= (g'',h)$ is a $(g',h)-(g'',h)$ shortest path in $G\circ H$ containing the vertex $(g_i, h)$, a contradiction to the fact that $S$ is  outer-convex in $G\circ H$. Hence $\pi_G(S)$ is an outer-convex dominating set in $G$. 
\end{proof}

\begin{figure}[H]
    \centering
\includegraphics[width=65mm,scale=.4]{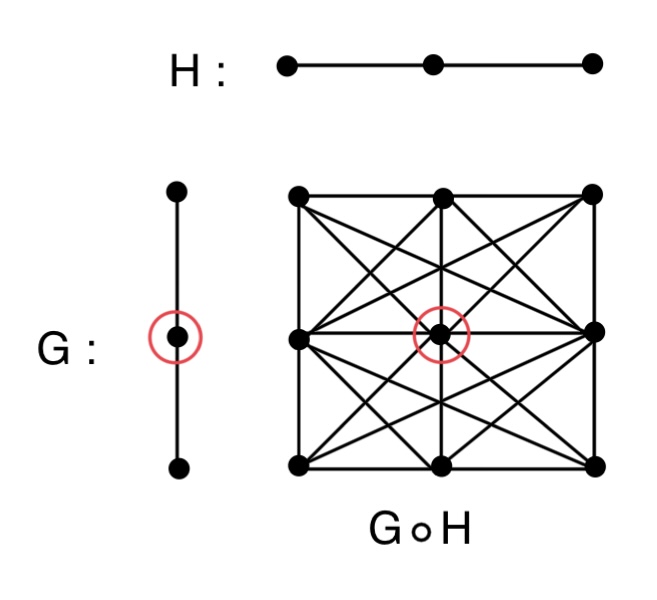}

    \caption{The marked vertex in $G \circ H$ represent an outer-weakly convex dominating set in $G \circ H$. However, their corresponding projection onto $G$ does not form an outer-weakly convex dominating set in $G$.} 
    \label{tol1}
  \centering
   \end{figure}

In the following, we determine the exact values of the outer-weakly convex domination number of lexicographic product graphs in terms of the domination number and the total domination number of the factor graphs. For a graph $G$ with no isolated vertices, 
a set $S \subseteq V(G)$ is called a \emph{total dominating set} of $G$ if every vertex of $G$ has at least one neighbor in $S$. 
The \emph{total domination number} of $G$, denoted by $\gamma_t(G)$, is the minimum cardinality of a total dominating set of $G$. The total domination number has been extensively studied, as detailed in the book \cite{rtotal}.

\begin{theorem}\label{lexico}
    Let $G$ a non-trivial connected graphs. Then for any non-trivial graph $H$, we have
     $$\widetilde{\gamma}_{wcon}(G\circ H) =
\begin{cases}
\gamma(G), & \text{if } \gamma(H) =1\\
\gamma_t(G), & \text{otherwise } 
\end{cases}$$
\end{theorem}
\begin{proof} Since the projection of any dominating set of $G\circ H$ onto $G$ itself is a dominating set of $G$, it follows that $\widetilde{\gamma}_{wcon}(G\circ H)\geq \gamma(G)$.  

   On the other hand, let $S$ be a minimum total dominating set of $G$.
In the following, we prove that the set $S' = S \times \{h\}$ is an outer-weakly convex dominating of in $G \circ H$, for any $h \in V(H)$.

We first prove that $S'$ is a dominating set of $G\circ H$. Let $(g',h')\in V(G\circ H)\setminus S'$. Since $S$ is a total dominating set of $G$, there is a vertex $g''\in S$ with $g'g''\in E(G)$.  This in turn implies that the vertex $(g',h')$ is dominated by $(g'',h)\in S'$.

Next, we prove that $V(G\circ H)\setminus S'$ is a weakly convex set of $G\circ H$.  Let $(x,y),(x',y')\in V(G\circ H)\setminus S'$. In the following, we construct an $(x,y)-(x',y')$ shortest path in the induced subgraph of $V(G\circ H)\setminus S'$. We consider the following two cases.\\
{\bf Case 1}:  $x=x'$. If $y$ and $y'$ are adjacent in $H$, then it is obvious.  So, assume $yy'\notin E(H)$. Then either $y\neq h$ or $y'\neq h$, say $y\neq h$.  Since $G$ is a non-trivial connected graph, there exists $x''\in V(G)$ such that $xx''\in E(G)$. Then the  path  $Q: (x,y), (x'', y ), (x',y)$ is a $(x,y)-(x',y')$ shortest path in the induced subgraph of $V(G\circ H)\setminus S'$.\\
{\bf Case 2}: $x\neq x'$.  Let $P: x=x_1, x_2,\ldots,x_r=x'$ be a $x-x'$  shortest path in $G$. Then,  for any $h'\neq h$,  the path $Q: (x,y)=(x_1,y),(x_2,h'),\ldots, (x_{r-1}, h'), (x_r,y)=(x',y')$ is a shortest $(x,y),(x',y')$-path in the induced subgraph of  $V(G\circ H)\setminus S'$.

Hence,  $S'$ is an outer-weakly convex set of $G\circ H$. Therefore, $\widetilde{\gamma}_{wcon}(G\circ H)\leq |S'| \leq {\gamma}_t(G)$.

Now, suppose that $\gamma(H)=1$, and let $\{h\}$ be a dominating set of $H$.  Then, for any minimum dominating set $S$ of $G$,  using arguments similar to the above, we can prove that the set $S\times \{h\}$ is an outer-weakly convex dominating set of $G\circ H$. Consequently,  $\widetilde{\gamma}_{wcon}(G\circ H) =\gamma(G)$.

On the other hand, assume that $\gamma(H)\geq 2$. Let $S$ be a minimum outer-weakly convex dominating set of $G\circ H$, and recall that $\pi_G(S)$ forms a dominating set of $G$. Now, let $g$ be an isolated vertex in the induced subgraph of $\pi_G(S)$ in $G$. Since $S$ is a dominating set of $G\circ H$ and $H$ has no universal vertices, it follows that $|S\cap (\{g\}\times V(H)|\geq 2$. Hence, each isolated vertex $g$ in the induced subgraph of $\pi_G(S)$ contributes at least two vertices to the set $S$ from the corresponding $H$ layer $^gH$ of $G\circ H$.  This in turn implies that $S$ contains at least $\gamma_t(G)$ vertices and so $\widetilde{\gamma}_{wcon}(G\circ H) =\gamma_t(G)$.
\end{proof}
Applying the known results for the domination and total domination numbers of complete graphs, paths, cycles, and complete bipartite graphs presented in \cite{r5,rtotal}, we obtain the following closed formulas for the outer-weakly convex domination numbers of the lexicographic product of certain classes of graphs.
\begin{corollary}
    Let $H$ be a non-trivial graph with $\gamma (H)= 1$. Then for positive integers $m,n\geq 2$, the following holds.

    \begin{enumerate}
      \item $\widetilde{\gamma}_{wcon}(K_n\circ H) =\gamma(K_n)=1$

\item $\widetilde{\gamma}_{wcon}(K_{m,n}\circ H) =\gamma(K_{m,n})=2$

\item $\widetilde{\gamma}_{wcon}(P_n\circ H) =\gamma(P_n)=\left\lceil \frac{n}{3} \right\rceil$

 \item $\widetilde{\gamma}_{wcon}(C_n\circ H) =\gamma(C_n)=\left\lceil \frac{n}{3}  \right\rceil \text{for}$ $ n\geq 3$

    \end{enumerate}
\end{corollary}
\begin{corollary}
    Let $H$ be a non-trivial graph with $\gamma (H)\geq 2$. Then for positive integers $m\geq 2$ and $n\geq 3$, the following holds.

    \begin{enumerate}
      \item $\widetilde{\gamma}_{wcon}(K_n\circ H) =\gamma_t(K_n)=2$

\item $\widetilde{\gamma}_{wcon}(K_{m,n}\circ H) =\gamma_t(K_{m,n})=2$

\item $\widetilde{\gamma}_{wcon}(P_n\circ H) =\widetilde{\gamma}_{wcon}(C_n\circ H) =\gamma_t(C_n)=\gamma_t(P_n)=\begin{cases}
\tfrac{n}{2} , & \text{if } n \equiv 0 \pmod{4}, \\[8pt]
 \tfrac{n+1}{2}, & \text{if } n \equiv 1,3 \pmod{4}, \\[8pt]
 \tfrac{n}{2}+1, & \text{if } n \equiv 2  \pmod{4}.
\end{cases}$
    \end{enumerate}
\end{corollary}

\section{Conclusion} The main contribution of this paper is the determination of tight bounds for the outer-weakly convex domination number on Cartesian and strong product graphs. Furthermore, the exact values of the outer-weakly convex domination number for lexicographic product of graphs are determined in terms of the domination and total domination numbers of one of the factor graphs. Our results suggest that finding the exact value for the outer weakly-convex domination number is a challenging task for the Cartesian and strong products. However, it would be of interest to continue the study on restricted classes of product graphs.
\section*{Acknowledgments} The authors are grateful to the three anonymous reviewers for their constructive feedback, which helped enhance the clarity and rigor of this work.

\baselineskip12pt

\end{document}